\newtheorem{theorem}{Theorem}[section]
\newtheorem{remark}[theorem]{Remark}
\begin{document}

\title[Miscoscopic expression of anomalous dissipation
]
{Microscopic expression of anomalous dissipation in Passive scalar transport
} 
\author{Tomonori Tsuruhashi} 
\address{Graduate School of Mathematical Sciences, University of Tokyo, Komaba 3-8-1 Meguro, 
Tokyo 153-8914, Japan,
} 
\email{
tomonori@ms.u-tokyo.ac.jp} 

\author{Tsuyoshi Yoneda} 
\address{Graduate School of Economics, Hitotsubashi University, 2-1 Naka, Kunitachi, Tokyo 186-8601, Japan} 
\email{t.yoneda@r.hit-u.ac.jp}





\begin{abstract}
We study anomalous dissipation from a microscopic viewpoint. 
In the work by Drivas, Elgindi, Iyer and Jeong (2022), the property of anomalous dissipation provides the existence of non-unique weak solutions for a transport equation with a singular velocity field. 
In this paper, we reconsider this solution in terms of kinetic theory and clarify
its microscopic property. 
Consequently, energy loss can be expressed by non-vanishing microscopic advection. 

\end{abstract} 

\maketitle

\section{Introduction}

In this paper, we consider the transport equation
\begin{equation}\label{tp}
\partial_t \theta + u \cdot \nabla \theta = 0, \quad  \theta|_{t=0}=\theta_0 
\qquad \mathrm{in}  \quad  [0, T] \times\mathbb{T}^d,   
\end{equation}
where $\theta$ is a real-valued function, $u$ is a divergence-free vector-valued function, $T > 0$ and $\mathbb{T}^d$ is defined by $\mathbb{R}^d / \mathbb{Z}^d$.
It is known that a weak solution of $(\ref{tp})$ can be constructed as a vanishing viscosity limit via the advection-diffusion equation:
\begin{equation}\label{ad}
\partial_t \theta^\kappa + u \cdot \nabla \theta^\kappa = \kappa \Delta \theta^\kappa, 
\quad \theta^\kappa |_{t=0}=\theta_0  
\qquad \mathrm{in}  \quad  [0, T] \times \mathbb{T}^d, 
\end{equation}
where $\kappa > 0$ and $\theta^\kappa$ is a real-valued function. 
Recently,  Drivas, Elgindi, Iyer and Jeong \cite{MR4381138} have shown the non-uniqueness results on $(\ref{tp})$ provided by
the anomalous dissipation:
\begin{align} \label{ano-dis}
\kappa \int_0 ^t \| \nabla \theta^\kappa(s) \|_{L^2(\mathbb{T}^d)}^2 ds \ge c > 0, 
\end{align}
where $c > 0$ is a constant independent of $\kappa$.
Concerning uniqueness, the regularity of $\theta$ and $u$ is crucial. 
Since the pioneering work of the renormalized solution by DiPerna and Lions \cite{MR1022305}, it has been extensively studied the relation between the regularity and the uniqueness (see for instance \cite{MR2096794,MR4071413}). 
Note that, considering the results in \cite{MR1022305}, we can see that the non-unique weak solutions in \cite{MR4381138} are not renormalized. 
According to the kinetic theory,  
construction of a global solution of first order quasi-linear equations was initially done by \cite{MR765504,MR705037,MR768737}. 
With the aid of
this theory,
 the first author \cite{MR4490734} characterized renormalization from a microscopic viewpoint and showed the violation of the chain rule, which causes the failure of renormalization.

Now, let us reformulate $(\ref{ad})$ from a microscopic viewpoint.
Based on the idea of the previous result \cite{MR4490734}, we expect that a renormalized solution of $(\ref{ad})$ satisfies the following equation formally: 
\begin{equation}\label{re-ad}
\partial_t \Phi( \theta^\kappa) + u \cdot \nabla \Phi( \theta^\kappa) = \kappa \Delta \Phi(\theta^\kappa) - \kappa \Phi''(\theta^\kappa) \sum_{i = 1}^d (\partial_{x_i} {\theta^\kappa})^ 2  \qquad  \mathrm{in}  \quad  [0, T] \times \mathbb{T}^d .
\end{equation}
Here, $\Phi: \mathbb{R} \to \mathbb{R}$ is a smooth function.
Indeed, if $\theta^\kappa$ is smooth and satisfies $(\ref{ad})$, it holds that
\begin{align*}
&\partial_t \Phi( \theta^\kappa) + u \cdot \nabla \Phi( \theta^\kappa) - \kappa \Delta \Phi(\theta^\kappa) + \kappa \Phi''(\theta^\kappa) \sum_{i = 1}^d (\partial_{x_i} {\theta^\kappa})^ 2   \\
=& \Phi'( \theta^\kappa) \partial_t  \theta^\kappa + u \cdot \Phi'( \theta^\kappa) \nabla \theta^\kappa\\
& \qquad  - \kappa \{ \Phi''(\theta^\kappa) \sum_{i = 1}^d (\partial_{x_i} {\theta^\kappa})^ 2 + \Phi'(\theta^\kappa) \Delta \theta^\kappa \}  + \kappa \Phi''(\theta^\kappa) \sum_{i = 1}^d (\partial_{x_i} {\theta^\kappa})^ 2   \\
=& \Phi'( \theta^\kappa) \{ \partial_t  \theta^\kappa + u \cdot \nabla \theta^\kappa - \kappa \Delta \theta^\kappa \} \\
=& 0.
\end{align*}
We call a solution of $(\ref{ad})$ is renormalized when it satisfies $(\ref{re-ad})$ for all $\Phi \in X$, where $X = \{ \int_{- \infty}^\tau \varphi(\xi) d\xi | \varphi \in C_0^\infty(\mathbb{R}) \}$ and $C_0^\infty(\mathbb{R})$ consists of all smooth functions which have a compact support in $\mathbb{R}$.
We regard the equation $(\ref{ad})$ as a macroscopic expression via the following form:
$$\chi_{\theta^\kappa} (t, x, \xi) = \left\{
\begin{split}
&1 &\quad &\left( \, \xi \le \theta^\kappa(t, x) \, \right), \\
&0 &\quad &\left( \, \xi > \theta^\kappa(t, x) \, \right).
\end{split} \right. $$
Then, the microscopic equation of $(\ref{ad})$ can be expressed as follows:
\begin{equation}\label{k-ad}
\partial_t \chi_{\theta^\kappa} + u \cdot \nabla \chi_{\theta^\kappa}
 = - \kappa \Delta \chi_{\theta^\kappa} + \kappa \sum_{i = 1}^d (\partial_{x_i} {\theta^\kappa})^ 2  \partial_\xi^2 \chi_{\theta^\kappa} \qquad  \mathrm{in}  \quad  [0, T] \times \mathbb{T}^d \times \mathbb{R}.
\end{equation}
Note that a weak solution to $(\ref{k-ad})$ is a renormalized solution to $(\ref{ad})$. 
In the same way, the microscopic equation of  $(\ref{tp})$ can be expressed as the following:
\begin{equation}\label{k-tp}
\partial_t \chi_\theta + u \cdot \nabla \chi_\theta
 = 0 \qquad  \mathrm{in}  \quad  [0, T] \times \mathbb{T}^d \times \mathbb{R}.
\end{equation}
By describing the property of anomalous dissipation and the non-uniqueness result in \cite{MR4381138} from the microscopic viewpoint, we will show that anomalous dissipation yields a microscopic external force in terms of the kinetic theory.
To be precise, in the microscopic viewpoint, the corresponding weak solution does not satisfy the desired microscopic equation $(\ref{k-tp})$, which is caused by the remaining  of  the term 
$\kappa \sum_{i = 1}^d (\partial_{x_i} {\theta^\kappa})^ 2  \partial_\xi^2 \chi_{\theta^\kappa}$. 

To begin with, we take a closer look of the quantity on $(\ref{ano-dis})$. 
Anomalous dissipation leads to the existence of a non-zero limit for the microscopic advection term. 
\begin{theorem} \label{non-zero-distr}
Let $0 < \tau < T$ and $0 \leq \alpha < 1$. 
Suppose that $\theta_0$ be smooth and that 
$$u \in C^\infty(([0, \tau) \cap (\tau, T)) \times \mathbb{T}^d)\cap L^1(0,T:C^\alpha(\mathbb{T}^d))\cap L^\infty([0,T)\times\mathbb{T}^d).$$ 
Let $\{ \theta^\kappa \}_{\kappa > 0}$ be a sequence of weak solutions to $(\ref{ad})$. 
If there exists a constant $c$ independent of $\kappa$ such that 
\begin{align} \label{anomalous-ineq}
\kappa \int_0 ^\tau \| \nabla \theta^\kappa(s) \|_{L^2(\mathbb{T}^d)}^2 ds \ge c > 0
\end{align}
for all $\kappa > 0$, then the term $\kappa \sum_{i = 1}^d (\partial_{x_i} {\theta^\kappa})^ 2  \partial_\xi^2 \chi_{\theta^\kappa}$ 
in $(\ref{k-ad})$ does not go to $0$ as $\kappa$ goes to $0$ in the sense of distributions.
\end{theorem}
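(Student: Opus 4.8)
The plan is to test the distribution $F^\kappa:=\kappa\sum_{i=1}^d(\partial_{x_i}\theta^\kappa)^2\,\partial_\xi^2\chi_{\theta^\kappa}$ against a carefully chosen $\psi\in C_0^\infty((0,T)\times\mathbb{T}^d\times\mathbb{R})$ and to show that $\langle F^\kappa,\psi\rangle$ stays bounded below by a positive constant as $\kappa\to0$. First I would move the two $\xi$-derivatives onto the test function. Since $\kappa|\nabla\theta^\kappa|^2$ depends only on $(t,x)$ while $\chi_{\theta^\kappa}(t,x,\cdot)=\mathbf{1}_{\{\xi\le\theta^\kappa(t,x)\}}$, integrating by parts twice in $\xi$ (the boundary terms vanish because $\psi$ has compact support) gives
\begin{align*}
\langle F^\kappa,\psi\rangle
&=\kappa\int_0^T\int_{\mathbb{T}^d}|\nabla\theta^\kappa|^2\Big(\int_{-\infty}^{\theta^\kappa(t,x)}\partial_\xi^2\psi\,d\xi\Big)\,dx\,dt\\
&=\kappa\int_0^T\int_{\mathbb{T}^d}|\nabla\theta^\kappa|^2\,\partial_\xi\psi(t,x,\theta^\kappa(t,x))\,dx\,dt,
\end{align*}
which is rigorous because $|\nabla\theta^\kappa|^2\in L^1([0,\tau]\times\mathbb{T}^d)$ for each fixed $\kappa$.

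Next I would design $\psi(t,x,\xi)=\phi(t)\eta(\xi)$ so that the right-hand side becomes the anomalous dissipation quantity. By the maximum principle for $(\ref{ad})$ with divergence-free $u\in L^\infty$, one has $\|\theta^\kappa(t)\|_{L^\infty}\le\|\theta_0\|_{L^\infty}=:M$ uniformly in $\kappa$ and $t$. Choose $\eta\in C_0^\infty(\mathbb{R})$ with $\eta'\equiv1$ on $[-M,M]$, so that $\partial_\xi\psi(t,x,\theta^\kappa)=\phi(t)\eta'(\theta^\kappa)=\phi(t)$ and hence $\langle F^\kappa,\psi\rangle=\kappa\int_0^T\phi(t)\|\nabla\theta^\kappa(t)\|_{L^2}^2\,dt$. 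Choosing $\phi\in C_0^\infty((0,T))$ with $0\le\phi\le1$ and $\phi\equiv1$ on $[\delta,\tau]$ for a small $\delta\in(0,\tau)$ (possible since $\tau<T$ leaves room for $\phi$ to vanish before $T$), and invoking $(\ref{anomalous-ineq})$, I obtain
\begin{align*}
\langle F^\kappa,\psi\rangle
&\ge\kappa\int_\delta^\tau\|\nabla\theta^\kappa(s)\|_{L^2}^2\,ds\\
&=\kappa\int_0^\tau\|\nabla\theta^\kappa(s)\|_{L^2}^2\,ds-\kappa\int_0^\delta\|\nabla\theta^\kappa(s)\|_{L^2}^2\,ds
\ge c-\kappa\int_0^\delta\|\nabla\theta^\kappa(s)\|_{L^2}^2\,ds.
\end{align*}

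It then remains to show that the dissipation does not concentrate at the initial time, i.e. $\kappa\int_0^\delta\|\nabla\theta^\kappa(s)\|_{L^2}^2\,ds\to0$ as $\kappa\to0$. On $[0,\delta]\subset[0,\tau)$ the field $u$ is smooth, so $\theta^\kappa$ is smooth there by parabolic regularity and the standard $H^1$ energy estimate applies: differentiating $(\ref{ad})$, pairing with $\nabla\theta^\kappa$, and using $\nabla\cdot u=0$ to cancel the transport contribution yields $\frac{d}{dt}\|\nabla\theta^\kappa\|_{L^2}^2\le 2\|\nabla u(t)\|_{L^\infty}\|\nabla\theta^\kappa\|_{L^2}^2$. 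Gronwall's inequality then gives a bound $\|\nabla\theta^\kappa(t)\|_{L^2}^2\le e^{2\int_0^\delta\|\nabla u\|_{L^\infty}\,ds}\|\nabla\theta_0\|_{L^2}^2$ that is uniform in $\kappa$ on $[0,\delta]$, whence $\kappa\int_0^\delta\|\nabla\theta^\kappa\|_{L^2}^2\,ds=O(\kappa)\to0$. Combining the three steps yields $\liminf_{\kappa\to0}\langle F^\kappa,\psi\rangle\ge c>0$, so $F^\kappa$ cannot converge to $0$ in the sense of distributions.

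The main obstacle is precisely this last localization. A priori the anomalous dissipation could pile up at the endpoint $t=0$ (or at the singular time $\tau$), and a test function supported in the open time interval would then fail to detect it; the point $t=\tau$ causes no difficulty since we keep $\phi\equiv1$ up to $\tau$ and only cut off beyond it, but the behavior near $t=0$ must genuinely be controlled. The estimate above uses the smoothness of $u$ on $[0,\tau)$ to rule out initial-time concentration, and it is the interplay between this uniform-in-$\kappa$ gradient control in the smooth region and the singularity of $u$ at $\tau$ that requires the most care.
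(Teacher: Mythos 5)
Your proof is correct, and its core mechanism is the same as the paper's: use the maximum principle to get $\|\theta^\kappa\|_{L^\infty}\le M$ uniformly in $\kappa$ and $t$, then choose a test function whose $\xi$-profile satisfies $\partial_\xi\psi=\phi(t)$ on $|\xi|\le M$, so that the pairing collapses to $\kappa\int\phi(t)\|\nabla\theta^\kappa(t)\|_{L^2}^2\,dt$ and the anomalous dissipation bound $(\ref{anomalous-ineq})$ takes over. (The paper builds this profile more laboriously, prescribing $\partial_\xi^2\tilde\phi$ piecewise with small negative "wings" of size $\epsilon$ to keep the profile compactly supported, at the harmless cost of a factor $1-2\epsilon$; your direct choice of $\eta\in C_0^\infty(\mathbb{R})$ with $\eta'\equiv 1$ on $[-M,M]$ is cleaner but equivalent.) Where you genuinely diverge is the treatment of the time endpoint. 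The paper's distributional framework uses test functions that are compactly supported in $[0,T)\times\mathbb{T}^d\times\mathbb{R}$, i.e.\ allowed to be nonzero at $t=0$; it therefore takes the time cutoff identically $1$ on all of $[0,\tau]$ and concludes from $(\ref{anomalous-ineq})$ in one line, with no need to discuss behavior near $t=0$. You instead work with the standard class $C_0^\infty((0,T)\times\mathbb{T}^d\times\mathbb{R})$, whose members vanish at $t=0$, and so you must exclude the scenario in which all of the dissipation concentrates near the initial time; your Gronwall estimate on $[0,\delta]$ — legitimate because $u$ is smooth up to $t=0$ on $[0,\tau)$, so the weak solutions coincide there with smooth parabolic solutions and the $H^1$ energy estimate is uniform in $\kappa$ — supplies exactly this, giving $\kappa\int_0^\delta\|\nabla\theta^\kappa\|_{L^2}^2\,ds=O(\kappa)$. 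The trade-off: your argument needs this extra lemma (and uses the smoothness of $u$ near $t=0$, which the paper's proof never touches), but it yields a slightly stronger conclusion, namely non-convergence already in $\mathcal{D}'((0,T)\times\mathbb{T}^d\times\mathbb{R})$; since that test class is contained in the paper's, the paper's statement follows a fortiori.
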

This theorem says that the existence of energy loss in some time interval yields a microscopic external force in the equation $(\ref{k-tp})$. 
More precisely, an inviscid limit of solutions to the advection equation gives a weak solution to the transport equation from a macroscopic viewpoint,  however, it is not adequate from a microscopic viewpoint.

Of course, the next question is naturally raised; whether there really exists such a sequence which gives a non-zero limit on a  microscopic advection term in Theorem $\ref{non-zero-distr}$. 
To answer this question, we construct such a sequence in almost the same way as in \cite{MR4381138}
(we just technically need to apply a mollifier to the sequence of solutions to be smooth, see also Remark 3.1 in \cite{MR4381138}). 
By adopting the velocity so that the assumptions of Theorem 3 in \cite{MR4381138} are fulfilled, we have a result on the existence of non-zero limit for the microscopic advection term. 

\begin{theorem} \label{result-like-DEIJ}
Let $d \ge 2$, $0 \leq \alpha < 1$ and $\theta_0 \in H^2(\mathbb{T}^d)$ be a mean-zero initial data for the equation $(\ref{tp})$. 
Then, there exist a divergence-free vector field 
$$u_{*} \in C^\infty([0,T/2)\times\mathbb{T}^d)\cap L^1([0,T/2]:C^\alpha(\mathbb{T}^d))\cap L^\infty([0,T/2]\times\mathbb{T}^d)$$ 
such that the following holds: 
Set 
$$u(t) = \left\{
\begin{split}
&u_{*}(t) &\quad &t \in [0, T/2), \\
&-u_{*}(T - t) &\quad &t \in [T/2, T]. 
\end{split} \right. $$
Then, there exists an inviscid solution to $(\ref{tp})$ which does not satisfy the equation $(\ref{k-tp})$. 
\end{theorem}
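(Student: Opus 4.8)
The plan is to reduce the statement to Theorem \ref{non-zero-distr} by producing a velocity field of the prescribed time-reflected form that forces anomalous dissipation on the first half-interval. Once such a field is available and its regularity is matched to the hypotheses of Theorem \ref{non-zero-distr} with $\tau=T/2$, that theorem immediately gives that the microscopic source term $\kappa\sum_{i=1}^d(\partial_{x_i}\theta^\kappa)^2\partial_\xi^2\chi_{\theta^\kappa}$ survives the inviscid limit, so the kinetic profile $\chi_\theta$ of the limiting solution $\theta$ cannot obey \eqref{k-tp}.

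First I would invoke Theorem 3 of \cite{MR4381138} to obtain, for the given mean-zero $\theta_0\in H^2(\mathbb{T}^d)$ with $d\ge 2$, a divergence-free field $u_*$ on $[0,T/2)$ whose associated advection--diffusion dynamics exhibits anomalous dissipation. Since that field is only of class $L^1([0,T/2];C^\alpha)\cap L^\infty$ and need not be smooth in space-time, I would mollify it away from the endpoint, as in Remark 3.1 of \cite{MR4381138}, to render it $C^\infty$ on $[0,T/2)\times\mathbb{T}^d$ while keeping the $C^\alpha$ and $L^\infty$ bounds under control. Defining $u(t)=u_*(t)$ on $[0,T/2)$ and $u(t)=-u_*(T-t)$ on $[T/2,T]$ yields a field that is smooth on $([0,T/2)\cup(T/2,T))\times\mathbb{T}^d$ and belongs to $L^1(0,T;C^\alpha(\mathbb{T}^d))\cap L^\infty$, with the only admissible temporal singularity located exactly at the reflection time $\tau=T/2$; this is precisely the structure demanded by Theorem \ref{non-zero-distr}.

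Next I would check the anomalous dissipation bound \eqref{anomalous-ineq} on $[0,\tau]$. On $[0,T/2)$ the mollified field drives the same cascade as in \cite{MR4381138}, so the inequality $\kappa\int_0^{T/2}\|\nabla\theta^\kappa(s)\|_{L^2(\mathbb{T}^d)}^2\,ds\ge c>0$ holds with a constant independent of $\kappa$, while the reflected half $[T/2,T]$ is what realizes non-uniqueness of the inviscid limit through the failed unmixing mechanism of \cite{MR4381138}. Extracting a subsequence $\kappa\to0$, the family $\{\theta^\kappa\}$ converges to an inviscid weak solution $\theta$ of \eqref{tp} and $\chi_{\theta^\kappa}\to\chi_\theta$ in distributions. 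Passing to the limit in \eqref{k-ad}, the diffusion term $-\kappa\Delta\chi_{\theta^\kappa}$ vanishes, whereas by Theorem \ref{non-zero-distr} the source term does not; hence $\partial_t\chi_\theta+u\cdot\nabla\chi_\theta\neq0$, so $\chi_\theta$ fails \eqref{k-tp}. Equivalently, anomalous dissipation forces strict energy loss $\|\theta(\tau)\|_{L^2}<\|\theta_0\|_{L^2}$ in the limit, which is incompatible with the conservation that \eqref{k-tp} (renormalization) would impose.

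I expect the main obstacle to be the mollification step: I must ensure that smoothing the \cite{MR4381138} field preserves the $\kappa$-uniform lower bound \eqref{anomalous-ineq}, not merely the qualitative anomalous dissipation. This calls for a stability estimate controlling how the cumulative dissipation depends on perturbations of $u$ measured in $L^1_t C^\alpha_x\cap L^\infty$, uniformly in the viscosity. A secondary technical point is justifying that $-\kappa\Delta\chi_{\theta^\kappa}$ genuinely vanishes in the distributional limit, so that the non-vanishing source term supplied by Theorem \ref{non-zero-distr} is not accidentally cancelled.
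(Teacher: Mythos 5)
Your proposal is correct and follows essentially the same route as the paper: take the Drivas--Elgindi--Iyer--Jeong construction (smoothed as in their Remark 3.1), note that the anomalous-dissipation bound $(\ref{anomalous-ineq})$ holds with $\tau = T/2$, apply Theorem \ref{non-zero-distr} so that the microscopic advection term does not vanish, and pass to the limit in the weak kinetic formulation $(\ref{weak-ad})$, where the $\kappa \Delta$ term disappears, to conclude that the inviscid limit fails $(\ref{k-tp})$. The only cosmetic difference is that the paper mollifies the sequence of solutions $\theta^\kappa$ rather than the velocity field $u_*$; both smoothing steps invoke the same remark of the cited work and serve the same purely technical purpose.
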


We emphasize that the microscopic advection term does not go to zero and that it remains as a microscopic external force in the right-hand side of the equation $(\ref{k-tp})$. 
On the other hand, such an external force must vanish in a smooth setting.

\begin{remark}
Let $\theta_0$ and $u$ be smooth. 
Suppose that $\theta$ be a smooth solution to $(\ref{tp})$, which is given by an inviscid limit of smooth solutions $\{ \theta^\kappa \}_{\kappa > 0}$ to $(\ref{ad})$.
Then, $\theta$ satisfies the microscopic equation $(\ref{k-tp})$. 
\end{remark}

In particular, it is worth pointing out that the microscopic advection term 
$$\kappa \sum_{i = 1}^d (\partial_{x_i} {\theta^\kappa})^ 2  \partial_\xi^2 \chi_{\theta^\kappa}$$ 
goes to zero as $\kappa$ goes to zero in the smooth setting. 
Although the proof of this remark is straightforward by our microscopic definition, 
the comparison between Theorem \ref{result-like-DEIJ} and this remark clarifies the influence of anomalous dissipation from a microscopic viewpoint. 
Consequently, energy loss can be expressed by non-vanishing microscopic advection.   


\section{Proof of theorems}

\begin{proof}[Proof of Theorem $\ref{non-zero-distr}$]

It is sufficient to show that there exist a positive constant $C>0$ and an appropriate test function $\phi \in C^\infty([0, T] \times \mathbb{T}^d \times \mathbb{R}^d)$ with $\mathrm{supp} \, \phi \subset [0, T) \times \mathbb{T}^d \times \mathbb{R}^d$ to be compact such that
$$\int_0^T\int_{\mathbb{T}^d} \int_{\mathbb{R}} \kappa \sum_{i = 1}^d (\partial_{x_i} {\theta^{\kappa}})^ 2 \chi_{\theta^{\kappa}} \partial_\xi^2 \phi  d\xi dx dt \ge C$$ 
for all $\kappa > 0$. 

Recall that the definition of $\chi_{\theta^\kappa}$ be 
$$\chi_{\theta^\kappa} (t, x, \xi) = \left\{
\begin{split}
&1 &\quad &\left( \, \xi \le \theta^\kappa(t, x) \, \right), \\
&0 &\quad &\left( \, \xi > \theta^\kappa(t, x) \, \right).
\end{split} \right. $$

Since $\{\theta^\kappa \}_\kappa$ is a sequence of solutions to $(\ref{ad})$, we can assume without the loss of generality that $-1\leq \theta^\kappa\leq 1$ on $[0, \tau) \times \mathbb{T}^d$ by maximal principal. 
Let $\tilde{\phi}: [0, T) \times \mathbb{T}^d \times \mathbb{R}^d \to \mathbb{R}$ be taken so that  
\begin{equation*}
\partial_\xi^2\tilde{\phi}(t, x, \xi)=
\begin{cases}
-\epsilon,\quad \xi\in[-3-\frac{1}{\epsilon},-3),\\
1,\quad \xi\in[-3,-1),\\
-\epsilon,\quad \xi\in [-1,-1+\frac{1}{\epsilon}),\\
0,\quad \text{otherwise}
\end{cases}
\end{equation*}
where $\epsilon > 0$ is a constant independent of $\kappa$ with $-1+1/\epsilon > 1$. 
%
Then, it holds that 
\begin{equation*}
\int_{\mathbb{R}} \chi_{\theta^{\kappa}} \partial_\xi^2 \tilde{\phi}(t, x, \xi)  d\xi
\ge 1 - 2 \epsilon > 0  \quad \mathrm{in} \, \, [0, \tau) \times \mathbb{T}^d
\end{equation*}
for all $\kappa$.
We take $\phi$ as a test function by modifying $\tilde{\phi}$ to be smooth and to satisfy its support condition in the direction $t$. 
In what follows, $C$ denotes a constant independent of $\kappa > 0$, whose exact value may change. 
By taking $\epsilon$ smaller if necessary, it holds that 
\begin{equation*}
\int_0^T\int_{\mathbb{T}^d} \int_{\mathbb{R}} \kappa \sum_{i = 1}^d (\partial_{x_i} {\theta^{\kappa}})^ 2 \chi_{\theta^{\kappa}} \partial_\xi^2 \phi  d\xi dx dt
\geq  C \int_0^\tau\int_{\mathbb{T}^d} \kappa \sum_{i = 1}^d (\partial_{x_i} {\theta^{\kappa}})^ 2 dx dt.
\end{equation*}
%
By the inequality $(\ref{anomalous-ineq})$, we have 
\begin{equation*}
\int_0^T\int_{\mathbb{T}^d} \int_{\mathbb{R}} \kappa \sum_{i = 1}^d (\partial_{x_i} {\theta^{\kappa}})^ 2 \chi_{\theta^{\kappa}} \partial_\xi^2 \phi  d\xi dx dt
\geq C > 0. 
\end{equation*}
This completes the proof.

\end{proof}

\begin{proof}[Proof of Theorem $\ref{result-like-DEIJ}$]

Let $\{\theta^\kappa \}$ be a solution to $(\ref{ad})$, which is the same as the example in \cite{MR4381138}. 
We modify this sequence so that it is smooth in the interval $(0, T/2)$ and $(T/2, T)$. 
To simplify notation, we use the same letter $\{\theta^\kappa \}$ for this modified sequence. 

Note that there exists an inviscid solution to $(\ref{tp})$ by the construction of $\{\theta^\kappa \}$. 
We denote by $\theta$ this inviscid solution. 
Since $\{\theta^\kappa \}$ be a solution to $(\ref{ad})$, we have
\begin{align} \label{weak-ad}
0 = \int_0^T\int_{\mathbb{T}^d} \int_{\mathbb{R}} & \{  \chi_{\theta^\kappa} \partial_t \phi + u \cdot \chi_{\theta^\kappa} \nabla_x \phi  - \kappa \chi_{\theta^\kappa} \Delta \phi + \kappa \sum_{i = 1}^d (\partial_{x_i} {\theta^\kappa})^ 2 \chi_{\theta^\kappa} \partial_\xi^2 \phi \} d\xi dx dt \\
& + \int_{\mathbb{T}^d} \int_{\mathbb{R}}  \chi_{\theta_0} \, \phi|_{t=0} d\xi dx
\end{align}
for all $\phi \in C^\infty([0, T] \times \mathbb{T}^d \times \mathbb{R}^d)$ with $\mathrm{supp} \, \phi \subset [0, T) \times \mathbb{T}^d \times \mathbb{R}^d$ to be compact.

By the definition of $\chi_{\theta^\kappa}$, it holds that
\begin{equation*}
\kappa\int_0^T\int_{\mathbb{T}^d}\int_{\mathbb{R}}\chi_{\theta^\kappa} \Delta\phi d\xi dxdt\to 0 
\quad \mathrm{as} \quad \kappa \to 0.
\end{equation*}
On the other hand, $\theta^\kappa$ satisfies the inequality $(\ref{anomalous-ineq})$ with $\tau$ to be $T/2$ by the construction in \cite{MR4381138}. 
In a similar way as the proof of Theorem $\ref{non-zero-distr}$, we see that 
the term $\kappa \sum_{i = 1}^d (\partial_{x_i} {\theta^\kappa})^ 2  \partial_\xi^2 \chi_{\theta^\kappa}$ does not go to $0$ as $\kappa$ goes to $0$ in the sense of distributions. 

These calculations show that the inviscid solution $\theta$ cannot give a solution to $(\ref{k-tp})$. 
This completes the proof.

\end{proof}

\vspace{0.5cm}
\noindent
{\bf Acknowledgments.}\ 
We would like to thank Professor Yoshikazu Giga for valuable comments on this work. 
Research of TT was partly supported by the JSPS Grant-in-Aid for JSPS Research Fellows 22J10745.
Research of TY was partly supported by the JSPS Grants-in-Aid for Scientific
Research  20H01819. 


\end{document}